\newtheorem{thm}{Theorem}[section]
\newtheorem{lem}[thm]{Lemma}
\newtheorem{prop}[thm]{Proposition}
\theoremstyle{definition}
\theoremstyle{remark}
\newtheorem{rmk}[thm]{Remark}
\numberwithin{equation}{section}
\newcommand{\ord}{\text{ord}}
\newcommand{\z}{{\mathbb Z}}
\newcommand{\q}{{\mathbb Q}}
\newcommand{\rank}{\text{rank}}
\begin{document}

\title{Completely $p$-primitive  binary quadratic forms}

\author{Byeong-Kweon Oh and Hoseog Yu}

\address{Department of Mathematical Sciences and Research Institute of Mathematics, Seoul National University, Seoul 08826, Korea}
\email{bkoh@snu.ac.kr}
\thanks{This work of the first author  was supported by the National Research Foundation of Korea (NRF-2017R1A2B4003758).}

\address{Department of Applied Mathematics, Sejong University, Seoul 05006, Korea}
\email{hsyu@sejong.ac.kr}

\subjclass[2000]{Primary 11E12, 11E20}

\keywords{binary quadratic forms, $p$-primitive representations}

\begin{abstract} Let $f(x,y)=ax^2+bxy+cy^2$ be a binary quadratic form with integer coefficients. For a prime $p$ not dividing the discriminant of $f$, we say $f$ is completely $p$-primitive if for any non-zero integer $N$,  the diophantine equation $f(x,y)=N$ has always an integer solution $(x,y)=(m,n)$ with  $(m,n,p)=1$ whenever it has an integer solution. In this article, we study various properties of completely $p$-primitive binary quadratic forms. In particular, we give a necessary and sufficient condition for a definite binary quadratic form $f$ to be completely $p$-primitive.
\end{abstract}

\maketitle \pagestyle{myheadings}
 \markboth {Byeong-Kweon Oh}{Completely $p$-primitive binary quadratic forms}

\maketitle

\section{Introduction}  A two variable homogeneous quadratic  polynomial with integer coefficients
$$
f(x,y)=ax^2+bxy+cy^2 \ (a,b,c \in \z)
$$ 
is called a binary quadratic form if the discriminant $D=b^2-4ac$ is a non-square integer.  We always  assume that $f$ is primitive, that is, $(a,b,c)=1$, unless stated otherwise.  An integer $N$ is said to be (primitively) represented by $f$ if $f(x,y)=N$ has an integer solution $x,y$ ($x,y$ such that $(x,y)=1$, respectively). The set of all non-zero integers that are (primitively) represented by $f$ is denoted by $Q(f)$ ($Q^*(f)$, respectively). As one of diophantine equations, it is quite a difficult problem to decide $Q(f)$ for  an arbitrary binary quadratic form $f$.   

In 1928, in his unpublished thesis,  B. W. Jones proved that if an odd prime $p$ is represented by $x^2+ky^2$, where  $k$ is a positive integer relatively prime to $p$,  then the diophantine equation $x^2+ky^2=N \ (N>0)$ has always an integer solution $x,y$ with 
$(x,y,p)=1$,  if it has an integer solution.  This  lemma was used by many authors to solve some problems on representations of positive ternary quadratic forms (see, for example, \cite{j}, \cite{ka1}, \cite{ka2}, \cite{oh1} and \cite{s}).

The aim of this article is to fully generalize Jones' lemma stated above. To be more precise, let $f$ be a binary quadratic form with discriminant $D$ and let $p$ be a prime not dividing $D$. 
An integer $a$ is said to be {\it $p$-primitively represented} by $f$  if  $f(x,y)=a$ has an integer solution $(x,y)=(m,n)$ with $(m,n,p)=1$. The set of all integers that are $p$-primitively represented by $f$ is denoted by $Q_p^*(f)$.  Then, clearly
$$
Q^*(f) \subset Q_p^*(f) \subset Q(f)
$$
for any prime $p$. Note that $0 \not \in Q(f)$ from the definition.  A binary form $f$ is called {\it completely $p$-primitive} if $Q(f)=Q_p^*(f)$.  Jones' lemma stated above says that  $x^2+ky^2$ is completely $p$-primitive 
for any odd prime $p \in Q(x^2+ky^2)$ relatively prime to $k$.

Let $D$ be a non-square integer congruent to $0$ or $1$  modulo $4$.  Let $\mathfrak G_D$ be the set of all proper classes of primitive binary quadratic forms with discriminant $D$.  Then it is well known that $\mathfrak G_D$ forms an abelian group with the composition law (for details, see \cite{Cas}).  Let $\mathfrak I_D$ be the identity  class in $\mathfrak G_D$. For any proper class $\mathfrak A \in \mathfrak G_D$, the set $Q(\mathfrak A)$ denotes $Q(f)$ for any binary quadratic form $f \in \mathfrak A$.   Similarly, we also define  $Q^*(\mathfrak A)$ and $Q_p^*(\mathfrak A)$. For any two binary forms $f$ and $g$ having same discriminant, we write
$$
f \simeq g \iff \text{there is a proper class $\mathfrak C \in \mathfrak G_D$ such that $f,g \in \mathfrak C$.}
$$
A proper class $\mathfrak D \in \mathfrak G_D$ is called an ambiguous class if  $\mathfrak D^2=\mathfrak I_D$.  For a prime $p$, a proper class $\mathfrak A \in \mathfrak G_D$ is called {\it completely $p$-primitive} if any binary form $f \in \mathfrak A$  is completely $p$-primitive, or equivalently, $Q(\mathfrak A)=Q_p^*(\mathfrak A)$.

 In this article, we study various properties of completely $p$-primitive binary forms for any prime $p$. If $\left(\frac Dp\right)=-1$, where $\left(\frac{\cdot}{\cdot}\right)$ is the Kronecker's symbol, then one may easily check that no integers divisible by $p$ are $p$-primitively represented by $f$.  Hence we always assume that $\left(\frac Dp\right)=1$.   We prove that if  $p^2 \in Q_p^*(\mathfrak I_D)$, then any proper class in $\mathfrak G_D$ is completely $p$-primitive. Conversely, if an ambiguous class $\mathfrak D \in \mathfrak G_D$ is completely $p$-primitive, then $p^2 \in Q_p^*(\mathfrak I_D)$.  Finally, for any prime $p$ such that $p^2 
  \not \in Q_p^*(\mathfrak I_D)$, we prove that a proper class $\mathfrak A \in \mathfrak G_D$ is completely $p$-primitive if and only if the order of $\mathfrak A$ in the group $\mathfrak G_D$ is  $4$ and  $p^2 \in Q_p^*(\mathfrak A^2)$, under the assumption that $D<0$. 
  
  Some  basic notations and terminologies on binary quadratic forms, especially the composition law between binary forms having same discriminant,  can be found in \cite{Cas}. See also \cite{OM} for some  basic notations and terminologies on $\z$-lattices. For a binary quadratic form $f(x,y)=ax^2+bxy+cy^2$,  we simply write $f=[a,b,c]$.  

\section{$p$-primitive representations of integers by binary quadratic forms}

Let $D$ be a non-square integer such that $D \equiv 0$  or $1 \pmod 4$. Let $\mathfrak G_D$ be the abelian group  of proper equivalence classes of primitive binary quadratic forms with discriminant $D$.  Let $\mathfrak I_D$ be the identity class in $\mathfrak G_D$, that is, $h(x,y) \in \mathfrak I_D$, where 
$$
h(x,y)=\begin{cases} x^2-\frac D4 y^2  \quad &\text{if $D$ is even,}\\  
x^2+xy+\frac{1-D}4y^2  \quad &\text{otherwise}.\end{cases}
$$

\begin{lem} \label{down} A binary quadratic form $f$ is completely $p$-primitive if and only if 
$ap^2 \in Q_p^*(f)$ for any $a \in Q_p^*(f)$. 
\end {lem}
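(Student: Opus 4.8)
The plan is to start from the observation that the inclusion $Q_p^*(f) \subset Q(f)$ always holds (as noted in the introduction), so $f$ is completely $p$-primitive exactly when the reverse inclusion $Q(f) \subset Q_p^*(f)$ holds, i.e. when every integer represented by $f$ is in fact $p$-primitively represented. I would then prove the two implications of the biconditional separately.

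For the direction \emph{completely $p$-primitive $\Rightarrow$ the condition}, take any $a \in Q_p^*(f)$, say $f(m,n)=a$ with $(m,n,p)=1$. Then $f(pm,pn)=p^2f(m,n)=ap^2$, so $ap^2 \in Q(f)$; since $Q(f)=Q_p^*(f)$ by assumption, we conclude $ap^2 \in Q_p^*(f)$. This direction is immediate and needs no real work.

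For the converse, assume $ap^2 \in Q_p^*(f)$ whenever $a \in Q_p^*(f)$, and show $Q(f) \subset Q_p^*(f)$. Let $N \in Q(f)$ and fix a representation $f(x_0,y_0)=N$. Put $k=v_p\big((x_0,y_0)\big)$, the exponent of $p$ in the $\gcd$, and write $x_0=p^k x_1$, $y_0=p^k y_1$; by the choice of $k$ at least one of $x_1,y_1$ is prime to $p$, so $(x_1,y_1,p)=1$. Then $N=f(x_0,y_0)=p^{2k}f(x_1,y_1)$, and setting $N_1=f(x_1,y_1)$ we obtain a genuinely $p$-primitive representation, i.e. $N_1 \in Q_p^*(f)$ with $N=p^{2k}N_1$. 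Starting from $N_1 \in Q_p^*(f)$ and applying the hypothesis repeatedly—each application sending $a \in Q_p^*(f)$ to $ap^2 \in Q_p^*(f)$—a straightforward induction on $k$ yields $N_1p^{2k}=N \in Q_p^*(f)$. Hence $Q(f) \subset Q_p^*(f)$, so $f$ is completely $p$-primitive.

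The argument is entirely elementary, and the only place deserving attention is the reduction step: one must strip off \emph{exactly} the $p$-part of the $\gcd$ of a representation, so that what remains is truly $p$-primitive and differs from $N$ by an even power of $p$. I expect the main (still minor) obstacle to be the bookkeeping of $v_p$, ensuring that the iterated hypothesis reconstructs $N$ precisely, rather than up to a spurious power of $p$.
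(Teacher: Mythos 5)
Your proof is correct and follows essentially the same route as the paper: the forward direction via scaling a $p$-primitive representation by $p$, and the converse by stripping the exact $p$-power from the gcd of a representation of $N$ to land in $Q_p^*(f)$ and then iterating the hypothesis. The paper phrases the reduction as $t=f(p^{\alpha}a,p^{\beta}b)$ with $a,b$ zero or prime to $p$ and divides by $p^{2\min(\alpha,\beta)}$, which is the same bookkeeping you carry out with $k=v_p\bigl((x_0,y_0)\bigr)$.
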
 

\begin{proof}  Note that ``only if" part is trivial. Assume that $ap^2 \in Q_p^*(f)$ for any $a \in Q_p^*(f)$. For any $t \in Q(f)$, let $t=f(p^{\alpha}a,p^{\beta}b)$, where $a,b$ are integers  that  are either zero or not divisible by $p$. Then $\frac t{p^{2\min(\alpha,\beta)}} \in Q_p^*(f)$. Now, by assumption, we have  $t \in Q_p^*(f)$.  Therefore we have $Q(f)=Q_p^*(f)$.
\end{proof}

Let $\mathfrak A, \mathfrak Z$ be proper classes of primitive binary quadratic forms with discriminant $D$ and $f \in \mathfrak A$ and $g \in \mathfrak Z$.
 It is well known that for any integers $a \in Q(\mathfrak A)$ and $\alpha \in  Q(\mathfrak Z)$, $a\alpha \in Q(\mathfrak A\cdot\mathfrak Z)$ (see \cite{ef}).  Furthermore, if  $a \in Q^*(\mathfrak A), \alpha \in  Q^*(\mathfrak Z)$ and $(a,\alpha)=1$, then $a\alpha \in Q^*(\mathfrak A\cdot\mathfrak Z)$.
However, if $a$ and $\alpha$ are not relatively prime, then it is no longer true. For  example,  if $[2,1,4] \in  \mathfrak A$ and $[2,-1,4] \in \mathfrak Z$, then  $7 \in Q^*(\mathfrak A) \cap Q^*(\mathfrak Z)$. However,  $7\!\cdot \!7 \not \in Q^*(\mathfrak A\cdot\mathfrak Z)=Q^*([1,1,8])$. Note that  for any positive integer $a$, $a \in Q^*(\mathfrak A)$ if and only if there are integers $b,c$ such that $[a,b,c] \in \mathfrak A$.

\begin{lem} \label{keyl} Under the notations given above, let $f=[a,b,c] \in \mathfrak A$ and $g=[\alpha,\beta,\gamma] \in \mathfrak Z$. If $(a,\alpha,D)=1$ and $2(a,\alpha)$ divides $\beta-b$, then 
$a\alpha \in Q^*(\mathfrak A\cdot\mathfrak Z)$. 
\end{lem}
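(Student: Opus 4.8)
The plan is to produce, inside the composite proper class $\mathfrak A\cdot\mathfrak Z$, a single form of the shape $[a\alpha,B,C]$. Once such a form is in hand the conclusion is immediate: it takes the value $a\alpha$ at $(x,y)=(1,0)$, and since $(1,0)$ is a primitive vector this is a primitive representation, so $a\alpha\in Q^*(\mathfrak A\cdot\mathfrak Z)$ by definition. Thus the whole task is to locate such a form in the composite class.

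To do this I would invoke Dirichlet's composition formula, as developed in \cite{Cas}: if two primitive forms $[a,b,c]$ and $[\alpha,\beta,\gamma]$ of discriminant $D$ satisfy $\left(a,\alpha,\tfrac{b+\beta}2\right)=1$, then there is an integer $B$ with $B\equiv b\pmod{2a}$, $B\equiv\beta\pmod{2\alpha}$ and $B^2\equiv D\pmod{4a\alpha}$, and the composite class $\mathfrak A\cdot\mathfrak Z$ contains exactly the form $\left[a\alpha,\,B,\,\tfrac{B^2-D}{4a\alpha}\right]$. (The first two congruences are solvable simultaneously because the hypothesis $2(a,\alpha)\mid\beta-b$ is precisely the compatibility condition modulo $(2a,2\alpha)=2(a,\alpha)$; the coprimality condition then allows one to adjust the common residue so as to also satisfy the square congruence modulo $4a\alpha$, which makes $C=(B^2-D)/(4a\alpha)$ an integer.) Hence the entire problem reduces to verifying the single numerical hypothesis $\left(a,\alpha,\tfrac{b+\beta}2\right)=1$ from the two given assumptions $(a,\alpha,D)=1$ and $2(a,\alpha)\mid\beta-b$.

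For that verification, write $d=(a,\alpha)$ and suppose some prime $q$ divides $\left(a,\alpha,\tfrac{b+\beta}2\right)$, so in particular $q\mid a$, $q\mid\alpha$, whence $q\mid d$. If $q$ is odd, then $q\mid\tfrac{b+\beta}2$ gives $q\mid b+\beta$, while $q\mid d$ together with $2d\mid\beta-b$ gives $q\mid\beta-b$; adding and subtracting these yields $q\mid b$. From $D=b^2-4ac$ with $q\mid a$ and $q\mid b$ we then get $q\mid D$, contradicting $(a,\alpha,D)=1$. The prime $q=2$ needs a short separate argument: if $2\mid a$ and $2\mid\alpha$, then $(a,\alpha,D)=1$ forces $D$ to be odd, so $b$ and $\beta$ are odd; but $2\mid d$ gives $4\mid 2d\mid\beta-b$, and then $b+\beta=(\beta-b)+2b\equiv 2b\equiv 2\pmod 4$, so $\tfrac{b+\beta}2$ is odd and $2\nmid\tfrac{b+\beta}2$. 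In every case we reach a contradiction, so $\left(a,\alpha,\tfrac{b+\beta}2\right)=1$, completing the proof.

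The step I expect to be the real obstacle — and the reason the hypothesis is stated as it is — is pinning down the \emph{correct sign}. The coprimality $(a,\alpha,D)=1$ by itself only yields $b\equiv\pm\beta$ modulo each common prime of $a$ and $\alpha$, and in the ``$-$'' case that prime can divide $\tfrac{b+\beta}2$; the composition formula would then output leading coefficient $a\alpha/q^2$ rather than $a\alpha$, and the conclusion would fail. The assumption $2(a,\alpha)\mid\beta-b$ is exactly what forces $b\equiv\beta$ and eliminates this ambiguity. Getting the congruence bookkeeping right at the prime $2$, where one must work modulo $4$ rather than modulo $2$, is the one place where care is genuinely required; everything else is the standard composition machinery of \cite{Cas}, which may be quoted directly.
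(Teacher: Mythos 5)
Your proof is correct. It rests on the same composition mechanism as the paper's --- producing a concordant pair $[a,B,\alpha\eta]$, $[\alpha,B,a\eta]$ with a common middle coefficient $B$ and composing to $[a\alpha,B,\eta]$ --- but the division of labor is different. The paper constructs $B$ by hand: it solves $a_1x_0-\alpha_1z_0=\frac{\beta-b}{2d}$ to equalize the middle coefficients, then uses the discriminant identity and a further congruence in $t_0$ (solvable because $(b,d)=1$, which is where $(a,\alpha,D)=1$ enters) to force the third coefficient to be divisible by $\alpha$. You instead quote the standard Dirichlet composition lemma, which asserts the existence of $B$ with $B\equiv b\ (\mathrm{mod}\ 2a)$, $B\equiv\beta\ (\mathrm{mod}\ 2\alpha)$, $B^2\equiv D\ (\mathrm{mod}\ 4a\alpha)$ under the single hypothesis $\bigl(a,\alpha,\tfrac{b+\beta}{2}\bigr)=1$, and you spend your effort verifying that hypothesis from the lemma's assumptions; your case analysis (odd $q$ forces $q\mid b$ and hence $q\mid D$; the prime $2$ handled modulo $4$ using that $D$ must be odd) is complete and correct. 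What your version buys is a cleaner conceptual statement of \emph{why} the hypotheses are what they are --- they are exactly what is needed to rule out the sign ambiguity $b\equiv-\beta$ at common prime divisors of $a$ and $\alpha$ --- at the cost of leaning on a stronger cited result (the full three-congruence existence statement) than the united-forms rule the paper uses; the paper's version is self-contained modulo only that rule.
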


\begin{proof}  Let $a=da_1$ and $\alpha=d\alpha_1$, where $d=(a,\alpha)$.   Let $x_0, z_0$ be integers such that $a_1x_0-\alpha_1z_0=\frac {\beta-b}{2d}$. Note that for any integer $t$,
$$
[a,b,c] \simeq [a,2a(\alpha_1t+x_0)+b,f(\alpha_1t+x_0,1)]
$$
and 
$$
[\alpha,\beta,\gamma] \simeq [\alpha,2\alpha(a_1t+z_0)+\beta,g(a_1t+z_0,1)].
$$
Since $\det([a,b,c])=\det([\alpha,\beta,\gamma])=D$ and $2a(\alpha_1t+x_0)+b=2\alpha(a_1t+z_0)+\beta$, we have 
$$
af(\alpha_1t+x_0,1)=\alpha g(a_1t+z_0,1).
$$
 Therefore, $f(\alpha_1t+x_0,1)$ is divisible by $\alpha_1$. Furthermore, since
$$
ax_0^2+bx_0+c=\alpha_1a(\alpha_1 t^2+2tx_0)+(b\alpha_1)t-f(\alpha_1t+x_0,1)\equiv 0 \ \text{(mod $\alpha_1$)},
$$
 there is an integer $m$ such that $ax_0^2+bx_0+c=\alpha_1m$. Let $t_0$ be an integer such that $bt_0+m \equiv 0 \pmod d$. Note that this is possible, for $(b,d)=1$.  Since $f(\alpha_1t_0+x_0,1)=\alpha\eta$ for some integer $\eta$, we have
$$
[a,b,c][\alpha,\beta,\gamma] \simeq [a,2a(\alpha_1t_0+x_0)+b,\alpha\eta][\alpha,2\alpha(a_1t_0+z_0)+\beta,*] \simeq [a\alpha,*,*] \in \mathfrak A \cdot \mathfrak Z.
$$
Therefore, we have $a\alpha \in Q^*(\mathfrak A\cdot \mathfrak Z)$.  
\end{proof}

\begin{lem}  \label{tekl}  Under the notations given above, if $a \in Q_p^*(\mathfrak A)$, $\alpha \in Q(\mathfrak Z)$,  where $(a,\alpha)=(\alpha,p)=1$, then $a\alpha \in Q_p^*(\mathfrak A\cdot\mathfrak Z)$.\end{lem}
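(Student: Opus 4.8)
The plan is to reduce the statement to the primitive case already recorded above, namely the classical fact that $a \in Q^*(\mathfrak A)$, $\alpha \in Q^*(\mathfrak Z)$ with $(a,\alpha)=1$ force $a\alpha \in Q^*(\mathfrak A\cdot\mathfrak Z)$, and then to restore the square factors that were stripped off, checking that they stay prime to $p$. Note that I do not expect to need the more delicate Lemma \ref{keyl}; the quoted primitive result suffices once the imprimitive part is removed.

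First I would strip the square part from each value. Since $a \in Q_p^*(\mathfrak A)$, choose $f \in \mathfrak A$ and integers $m,n$ with $f(m,n)=a$ and $(m,n,p)=1$; setting $e=(m,n)$ and writing $m=em_0$, $n=en_0$ with $(m_0,n_0)=1$ gives $a=e^2a_0$, where $a_0:=f(m_0,n_0)\in Q^*(\mathfrak A)$, and crucially $(e,p)=1$ because $(m,n,p)=1$. Likewise, choosing $g\in\mathfrak Z$ with $g(x,y)=\alpha$ and setting $d=(x,y)$ yields $\alpha=d^2\alpha_0$ with $\alpha_0\in Q^*(\mathfrak Z)$; here the hypothesis $(\alpha,p)=1$ forces $(d,p)=1$ since $d^2\mid\alpha$.

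Next, since $(a,\alpha)=1$ while $a_0\mid a$ and $\alpha_0\mid\alpha$, we have $(a_0,\alpha_0)=1$, so the quoted primitive-composition result applies and gives $a_0\alpha_0\in Q^*(\mathfrak A\cdot\mathfrak Z)\subseteq Q_p^*(\mathfrak A\cdot\mathfrak Z)$, the last inclusion because any primitive representation is automatically $p$-primitive. Finally I restore the squares: writing $a\alpha=(ed)^2(a_0\alpha_0)$ and taking a $p$-primitive representation $h(u,v)=a_0\alpha_0$ by some $h\in\mathfrak A\cdot\mathfrak Z$, the scaled pair $(edu,edv)$ represents $a\alpha$ and remains $p$-primitive, because $(ed,p)=1$ means $p\mid edu$ and $p\mid edv$ would force $p\mid(u,v)=1$. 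Hence $a\alpha\in Q_p^*(\mathfrak A\cdot\mathfrak Z)$.

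The only real subtlety, and it is a mild one, is the bookkeeping of divisibility by $p$: one must verify that neither extracted square factor $e$, $d$ carries a factor of $p$ (this is exactly where the two hypotheses $a\in Q_p^*(\mathfrak A)$ and $(\alpha,p)=1$ enter), and that rescaling a $p$-primitive representation by an integer prime to $p$ preserves $p$-primitivity. I emphasize that $a$ itself may well be divisible by $p$; this causes no difficulty, since it is the greatest common divisor of the representing pair, not the represented value, that must avoid $p$.
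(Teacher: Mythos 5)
Your proposal is correct and follows essentially the same route as the paper: strip the square part of each representation (noting the extracted factors are prime to $p$ thanks to $p$-primitivity of the first representation and $(\alpha,p)=1$ for the second), compose the resulting primitive values via the coprime primitive-composition fact, and rescale. The only cosmetic difference is that the paper invokes Lemma \ref{keyl} at the composition step, whereas you invoke the classical statement quoted just before it; in the coprime case these amount to the same thing.
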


\begin{proof}  Assume that $a=f(dx_0,dy_0)$, where $(d,p)=(x_0,y_0)=1$. Then $\frac a{d^2} \in Q^*(\mathfrak A)$. Since $\alpha \in Q(\mathfrak Z)$ and $(\alpha,p)=1$, there is an integer $\delta$ relatively prime to $p$ such that $\frac \alpha{\delta^2} \in Q^*(\mathfrak Z)$. From the assumption that $(a,\alpha)=1$, we have $\frac {a\alpha}{d^2\delta^2} \in Q^*(\mathfrak A\cdot\mathfrak Z)$ by Lemma \ref{keyl}. Now, the lemma follows from the fact that $(p,d\delta)=1$. 
\end{proof}

\begin{prop} \label{keyp}   Under the notations given above, if $a \in Q_p^*(\mathfrak A)$, $\alpha \in Q_p^*(\mathfrak Z)$,  where $(a,\alpha,D)=1$, then $a\alpha \in Q_p^*(\mathfrak A\cdot\mathfrak Z) \cup Q_p^*(\mathfrak A\cdot\mathfrak Z^{-1})$.  \end{prop}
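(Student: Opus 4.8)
The plan is to reduce to genuine (coprime) primitive representations, and then to control, via the composition law and Lemma \ref{keyl}, the \emph{content} (the gcd of the representing pair) of the resulting representation of $a\alpha$ in the two candidate classes $\mathfrak A\cdot\mathfrak Z$ and $\mathfrak A\cdot\mathfrak Z^{-1}$.

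First I would strip off the $p$-free square parts exactly as in the proof of Lemma \ref{tekl}: writing a $p$-primitive representation $a=f(dm_0,dn_0)$ with $(d,p)=(m_0,n_0)=1$ gives $a/d^2\in Q^*(\mathfrak A)$, and likewise $\alpha/e^2\in Q^*(\mathfrak Z)$ with $(e,p)=1$. Since multiplying a $p$-primitive representation by a square coprime to $p$ preserves $p$-primitivity, and since $(a/d^2,\alpha/e^2,D)=1$ persists, it suffices to treat $a\in Q^*(\mathfrak A)$, $\alpha\in Q^*(\mathfrak Z)$; note that the conclusion must stay in $Q_p^*$ rather than $Q^*$, because the representation of $a\alpha$ I produce need not itself be primitive. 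Now I choose forms $[a,b,c]\in\mathfrak A$ and $[\alpha,\beta,\gamma]\in\mathfrak Z$ and set $d=(a,\alpha)$. The hypothesis $(a,\alpha,D)=1$ forces $(d,D)=1$, and comparing discriminants gives $b^2\equiv D\equiv\beta^2\pmod{4d}$, whence $b\equiv\pm\beta$ modulo each prime power dividing $d$ (the two square roots of $D$ being negatives of one another since $(d,D)=1$; the prime $2$, which can occur here only when $D$ is odd, needs a short separate check). This lets me factor $d=d_+d_-$, where $d_+$ collects the prime powers on which $b\equiv\beta$ and $d_-$ those on which $b\equiv-\beta$. On the $d_+$ part the form $[\alpha,\beta,\gamma]$ is aligned with $[a,b,c]$, while on the $d_-$ part the opposite form $[\alpha,-\beta,\gamma]\in\mathfrak Z^{-1}$ is aligned with it.

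The main step, and the principal obstacle, is to show that $a\alpha$ is represented by $\mathfrak A\cdot\mathfrak Z$ with content divisible by $d_-$, and by $\mathfrak A\cdot\mathfrak Z^{-1}$ with content divisible by $d_+$. The idea is to divide out the mismatched common factor before composing: after extracting $d_-$ from the two leading coefficients, the remaining parts are aligned modulo twice their gcd, so Lemma \ref{keyl} composes them into a primitive representation of $a\alpha/d_-^2$ by $\mathfrak A\cdot\mathfrak Z$, and scaling by $d_-$ produces a representation of $a\alpha$ of content $d_-$; the symmetric argument with $[\alpha,-\beta,\gamma]$ and $d_+$ handles $\mathfrak A\cdot\mathfrak Z^{-1}$. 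Carrying out this extraction cleanly in the language of forms is the delicate point, since Lemma \ref{keyl} as stated requires the congruence modulo twice the \emph{full} gcd; in ideal-theoretic terms the statement is transparent, as the content of the product ideal is precisely the product of the primes above which $\mathfrak A$ and $\mathfrak Z$ choose opposite factors, but the paper's form-based framework will require an honest composition computation.

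Finally I would close with a counting triviality: since $d_+d_-=d$ and $p$ is a single prime, $p$ divides at most one of $d_+,d_-$. If $p\nmid d_-$, the content-$d_-$ representation of $a\alpha$ in $\mathfrak A\cdot\mathfrak Z$ is $p$-primitive; otherwise $p\mid d_-$, so $p\nmid d_+$ and the content-$d_+$ representation in $\mathfrak A\cdot\mathfrak Z^{-1}$ is $p$-primitive. Either way $a\alpha\in Q_p^*(\mathfrak A\cdot\mathfrak Z)\cup Q_p^*(\mathfrak A\cdot\mathfrak Z^{-1})$, and re-inserting the $p$-coprime square factors $d^2,e^2$ from the first step returns the claim for the original $a,\alpha$. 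It is this final dichotomy that explains the appearance of both $\mathfrak A\cdot\mathfrak Z$ and $\mathfrak A\cdot\mathfrak Z^{-1}$ in the statement: the two classes carry complementary obstructions to $p$-primitivity, and a single prime cannot obstruct both.
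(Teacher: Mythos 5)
Your proposal is correct and follows essentially the same route as the paper's proof: both arguments first reduce to $Q^*$ exactly as in Lemma \ref{tekl}, then split $d=(a,\alpha)$ into coprime factors $d_1d_2$ (your $d_+d_-$) according to whether $b\equiv\beta$ or $b\equiv-\beta$ on each prime power dividing $d$ (which is where $(a,\alpha,D)=1$ enters), and conclude from the observation that $p$ can divide at most one of the two factors --- this is precisely the paper's dichotomy $p\mid d_1$ versus $p\mid d_2$, and it is indeed the source of the union $Q_p^*(\mathfrak A\cdot\mathfrak Z)\cup Q_p^*(\mathfrak A\cdot\mathfrak Z^{-1})$ in the statement. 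The one place you stop short is the step you yourself flag: applying Lemma \ref{keyl} when the congruence holds only modulo twice a \emph{divisor} of the gcd. The paper's device is to factor the classes rather than the represented values: writing $a=d_1'd_2'\widetilde a$ and $\alpha=d_1''d_2''\widetilde\alpha$ with $(d_1',d_1'')=d_1$ and $(d_2',d_2'')=d_2$, it decomposes $\mathfrak A=\mathfrak A_{d_1'}\cdot\mathfrak A_{d_2'}\cdot\mathfrak A_{\widetilde a}$ using the forms $[d_1',b,d_2'\widetilde a c]$, etc., applies Lemma \ref{keyl} only to the pair $[d_1',b,*]$, $[d_1'',\beta,*]$ (whose leading coefficients have gcd exactly $d_1$, which \emph{does} divide $(\beta-b)/2$), and then multiplies the resulting primitive value $d_1'd_1''$ by the remaining value $d_2'\widetilde a\cdot d_2''\widetilde\alpha$, coprime to $p$, via Lemma \ref{tekl}; the case $p\mid d_2$ is handled symmetrically with $\mathfrak Z^{-1}$ and $-\beta$. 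This bookkeeping sidesteps the verification your sketch would otherwise need, namely that the extracted composition representing $a\alpha/d_-^2$ really lands in $\mathfrak A\cdot\mathfrak Z$ (equivalently, that the classes of $[d_-,b,*]$ and $[d_-,\beta,*]$ are mutually inverse); that verification is true and not hard, so your outline would also close, but the paper's class factorization is the cleaner way to make the ``honest composition computation'' you anticipate.
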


\begin{proof} Without loss of generality, we may assume that $a \in Q^*(\mathfrak A)$ and $\alpha \in Q^*(\mathfrak Z)$. 
Let $d=(a,\alpha)$. Let $f=[a,b,c] \in \mathfrak A$ and $g=[\alpha,\beta,\gamma] \in \mathfrak Z$, for some integers $b,c$ and $\beta,\gamma$. 
Since 
$D=b^2-4ac=\beta^2-4\alpha\gamma$, we have $\beta^2-b^2 \equiv 0 \pmod {4d}$. Furthermore, since $\left(\frac{\beta-b}2,\frac{\beta+b}2,d\right)=1$,  there are relatively prime integers $d_1, d_2$ such that $d=d_1d_2$, and
$$
\frac {\beta-b}2 \equiv 0 \pmod {d_1} \quad \text{and} \quad  \frac {\beta+b}2 \equiv 0 \pmod {d_2}.
$$
Choose integers $d_1', d_2', d_1''$ and $d_2''$ such that $a=d_1'd_2'\widetilde{a}$ and $\alpha=d_1''d_2''\widetilde{\alpha}$, where $(d_1,d_2'd_2''\cdot\widetilde{a}\widetilde{\alpha})=(d_2,d_1'd_1''\cdot\widetilde{a}\widetilde{\alpha})=1$. Note that $(d_1',d_1'')=d_1$ and $(d_2',d_2'')=d_2$.   
We define proper classes $\mathfrak A_{d_1'}$,  $\mathfrak A_{d_2'}$ and $\mathfrak A_{\widetilde{a}} $ satisfying
$$
[d_1',b,d_2'\widetilde{a}c] \in \mathfrak A_{d_1'} ,  \ \   [d_2',b,d_1'\widetilde{a}c] \in \mathfrak A_{d_2'}  \quad \text{and} \quad  [\widetilde{a},b,d_1'd_2'c] \in \mathfrak A_{\widetilde{a}} .
$$
Similarly, we also define proper classes $\mathfrak Z_{d_1''}, \mathfrak Z_{d_2''}$ and $\mathfrak Z_{\widetilde{\alpha}}$. Then, clearly we have 
$$
\mathfrak A=\mathfrak A_{d_1'}\cdot\mathfrak A_{d_2'}\cdot\mathfrak A_{\widetilde{a}} \quad \text{and} \quad \mathfrak Z=\mathfrak Z_{d_1''}\cdot\mathfrak Z_{d_2''}\cdot\mathfrak Z_{\widetilde{\alpha}}.
$$ 

If $p$ does not divide $d$, then the proof is almost trivial. Hence we assume that $p$ divides $d$.   
First, assume that $p$ divides $d_1$. Then by Lemma \ref{keyl}, we have
$d_1'd_1'' \in Q_p^*(\mathfrak A_{d_1'}\cdot\mathfrak Z_{d_1''})$.  Since 
$$
d_2'\widetilde{a}\cdot d_2''\widetilde{\alpha} \in Q(\mathfrak A_{d_2'}\cdot\mathfrak A_{\widetilde{a}}\cdot\mathfrak Z_{d_2''}\cdot\mathfrak Z_{\widetilde{\alpha}}) \quad \text{and}  \quad (d_1'd_1'',d_2'\widetilde{a}\cdot d_2''\widetilde{\alpha})=(d_2'\widetilde{a}\cdot d_2''\widetilde{\alpha},p)=1,
$$
  we have $a\alpha \in Q_p^*(\mathfrak A\cdot\mathfrak Z)$ by Lemma \ref{tekl}. Now, assume that $p$ divides $d_2$. In this case, we have
$d_2'd_2'' \in Q_p^*(\mathfrak A_{d_2'}\cdot\mathfrak Z_{d_2''}^{-1})$ by Lemma \ref{keyl}.  Since $d_1'\widetilde{a}\cdot d_1''\widetilde{\alpha} \in Q(\mathfrak A_{d_1'}\cdot\mathfrak A_{\widetilde{a}}\cdot\mathfrak Z_{d_1''}^{-1}\cdot\mathfrak Z_{\widetilde{\alpha}}^{-1})$, we may conclude that $a\alpha \in Q_p^*(\mathfrak A\cdot\mathfrak Z^{-1})$ by Lemma \ref{tekl}. 
\end{proof}

\section{$p$-primitive representations of binary $\z$-lattices}

Let $L$ and $M$ be non-classic integral $\z$-lattices in a quadratic space $V$. Here a $\z$-lattice is non-classic integral if its norm 
ideal $\mathfrak n(L)$ is $\z$. Let $p$ be a prime.  A representation $\sigma : M \to L$ is called {\it  $p$-primitive}  if $\sigma(M) \not \subset pL$.  We say $M$ is $p$-primitively represented by $L$ if there is a $p$-primitive representation from $M$ to $L$. Note that if $\sigma(M_p)$ is a primitive submodule of $L_p$, that is, $\sigma : M_p \to L_p$ is a primitive representation, then $\sigma$ is $p$-primitive. If $\rank(M)=1$, then the converse is also true. However, for the higher rank case, the converse is not true in general.  A $p$-primitive representation $\sigma : pL \to L$ is called {\it essential} if $\sigma \in O(\q\otimes L)$ has an infinite order.  Recall that an isometry $\sigma \in O(V)$ is called {\it proper} if $\det(\sigma)=1$, and {\it improper} otherwise. 

Let $D$ be a non-square integer congruent to $0$ or $1$ modulo $4$. 
Let $\mathfrak A$ be a proper class with discriminant $D$ and let $f=[a,b,c] \in \mathfrak A$ be a binary quadratic form.  
The binary $\z$-lattice  corresponding to $f$ (or $\mathfrak A$) is defined by $L_f=\z \bold x+\z \bold y$  such that 
$$
Q(\bold x)=a, \  \ 2B(\bold x,\bold y)=b \quad \text{and}  \quad Q(\bold y)=c.
$$   
Note that the binary lattice corresponding to $\mathfrak A$ is isometric to the binary lattice corresponding to $\mathfrak A^{-1}$.      
Conversely, for a binary $\z$-lattice $L=\z \bold x+\z \bold y$, the binary quadratic form $f_L$ corresponding to $L$ is defined by $f_L=[Q(\bold x),2B(\bold x,\bold y),Q(\bold y)]$. 
Note that the discriminant $D_L$ of the binary $\z$-lattice $L$ is defined by $D_L=Q(\bold x)Q(\bold y)-B(\bold x,\bold y)^2$, as usual.  

\begin{thm} \label{ppre} Let $D$ be a non-square integer congruent to $0$ or $1$ modulo $4$  and let $p$ be a prime such that $\left(\frac Dp\right)=1$. Then the followings are all equivalent:
\begin{itemize}
\item [(i)] $4p^2 \in Q_p^*([1,0,-D])$;
\item [(ii)] $p^2\in Q_p^*(\mathfrak I_D)$;
\item [(iii)] there is a binary $\z$-lattice $L=\z \bold x+\z \bold y$ with discriminant $-D/4$ and a proper $p$-primitive representation $\sigma  : pL \to L$;
\item [(iv)] for any binary $\z$-lattice $L=\z \bold x+\z \bold y$ with discriminant $-D/4$, there is a proper $p$-primitive representation $\sigma  : pL \to L$;
\item [(v)] there is a binary $\z$-lattice $L=\z \bold x+\z \bold y$ with discriminant $-D/4$ and a proper $p$-primitive essential representation $\sigma  : pL \to L$;
\item [(vi)] for any binary $\z$-lattice $L=\z \bold x+\z \bold y$ with discriminant $-D/4$, there is a proper $p$-primitive essential representation $\sigma  : pL \to L$.
\end{itemize}
\end{thm}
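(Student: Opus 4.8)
The plan is to dispose of (i) $\Leftrightarrow$ (ii) by an elementary substitution and to route the remaining equivalences through the arithmetic of the quadratic order $\mathcal O_D$ of discriminant $D$, where the lattice conditions (iii)--(vi) all become a single, class-free statement. First I would prove (i) $\Leftrightarrow$ (ii). The form $[1,0,-D]$ has discriminant $4D$, while $\mathfrak I_D$ has discriminant $D$, and the factor $4$ is accounted for by a change of variable. When $D$ is even, a solution of $X^2-DY^2=4p^2$ forces $X=2x$ even, and then $x^2-\tfrac{D}{4}Y^2=p^2$ exhibits $p^2\in Q(\mathfrak I_D)$; conversely $h(x,Y)=p^2$ gives $(2x)^2-DY^2=4p^2$. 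When $D$ is odd the identity $4h(x,y)=(2x+y)^2-Dy^2$ plays the same role, using that $X^2-DY^2\equiv 0 \pmod 4$ forces $X\equiv Y \pmod 2$. For $p$ odd these substitutions are invertible modulo $p$ and hence match $p$-primitive solutions on the two sides; the remaining case $p=2$ (which forces $D\equiv 1\pmod 8$) is checked directly.

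Next I would set up the dictionary between those binary $\z$-lattices of discriminant $-D/4$ whose associated form is primitive and the invertible fractional ideals of $\mathcal O_D$, under which $V=\q\otimes L$ is identified with $K=\q(\sqrt D)$ carrying its norm form $N$ (see \cite{Cas}). Under this identification $Q_p^*(\mathfrak A)$ is the set of norms $N(\xi)$ of $p$-primitive $\xi$ lying in the corresponding ideal $\mathfrak a$, the proper isometries of $V$ are exactly the multiplications by norm-one elements $\gamma\in K$, and the improper ones are $x\mapsto\gamma\bar x$. A proper representation $\sigma:pL\to L$ is thus multiplication by some $\gamma$ with $N(\gamma)=1$ and $\gamma\,p\mathfrak a\subseteq\mathfrak a$; since $\mathfrak a$ is invertible, $\mathcal O_D\mathfrak a=\mathfrak a$, so this containment collapses to $\gamma\in\tfrac1p\mathcal O_D$, i.e.\ $\beta:=p\gamma\in\mathcal O_D$ with $N(\beta)=p^2$, and $\sigma$ is $p$-primitive exactly when $\gamma\notin\mathcal O_D$, i.e.\ when $p\nmid\beta$. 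The decisive point is that the resulting condition --- existence of $\beta\in\mathcal O_D$ with $N(\beta)=p^2$ and $p\nmid\beta$ --- does not mention $\mathfrak a$. Hence existence of a proper $p$-primitive $\sigma$ for one lattice is equivalent to its existence for every lattice of discriminant $-D/4$, giving (iii) $\Leftrightarrow$ (iv); and specialising to $\mathfrak a=\mathcal O_D$ (that is, $L=L_h\in\mathfrak I_D$) the condition reads $p^2=N(\beta)\in Q_p^*(\mathfrak I_D)$, which is exactly (ii). Here one uses that $\left(\frac Dp\right)=1$ forces $p\nmid D$, so $p$ is prime to the conductor and splits in $\mathcal O_D$, which is what makes such $\beta$ available.

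Finally I would observe that essentiality is automatic: $\sigma$ is essential iff $\gamma=\beta/p$ has infinite order in $O(V)$, and a finite-order proper isometry is a root of unity $\zeta\in\mathcal O_D^\times$; but $\gamma=\zeta$ would give $\beta=p\zeta\in p\mathcal O_D$, contradicting $p\nmid\beta$. Thus every proper $p$-primitive representation already has infinite order, which yields (iii) $\Leftrightarrow$ (v) and (iv) $\Leftrightarrow$ (vi) and closes the chain. For the existence half one may alternatively write $\sigma$ down explicitly on the identity lattice: from a $p$-primitive $\bold v\in L_h$ with $Q(\bold v)=p^2$ one sets $\sigma(p\bold x)=\bold v$ and lets $\sigma(p\bold y)$ be the integral vector of matching norm spanning $\bold v^{\perp}\cap L_h$, then checks $\det\sigma=1$ and that $\bold v\notin pL_h$; such a $\bold v$ is furnished by (ii), or for an arbitrary class by combining $p^2\in Q_p^*(\mathfrak I_D)$ with Lemma \ref{tekl}.

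The main obstacle I anticipate lies in the second paragraph: verifying rigorously that, for an invertible $\mathcal O_D$-ideal, the integrality constraint $\sigma(pL)\subseteq L$ and the $p$-primitivity of $\sigma$ translate precisely into $\beta\in\mathcal O_D$, $N(\beta)=p^2$, $p\nmid\beta$, that $Q_p^*$ corresponds exactly to norms of $p$-primitive elements, and that the proper/improper split is correctly aligned --- all while working in the possibly non-maximal order $\mathcal O_D$ and using $p\nmid D$ to control the behaviour of $p$. Once this correspondence is in place, the substitution step (i) $\Leftrightarrow$ (ii) and the essentiality step are routine.
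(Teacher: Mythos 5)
Your proposal is correct in outline but takes a genuinely different route from the paper's. The paper works entirely in explicit coordinates: for (i) $\Rightarrow$ (vi) it writes down, on an arbitrary lattice with $f_L=[a,b,c]$, the isometry $\sigma(p\bold x)=\frac{m+bn}{2}\bold x-an\bold y$, $\sigma(p\bold y)=cn\bold x+\frac{m-bn}{2}\bold y$ built from a solution of $4p^2=m^2+(-D)n^2$ with $(m,n,p)=1$, and reads off properness, $p$-primitivity and essentiality from the characteristic polynomial $x^2-\frac mp x+1$; for (iii) $\Rightarrow$ (i) it solves directly the four equations expressing that $\sigma$ is a proper isometry carrying $pL$ into $L$. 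Your ideal-theoretic translation is the conceptual form of the same construction: your $\beta$ is $\frac{m+n\sqrt D}{2}$ and the collapse of $\gamma\, p\mathfrak a\subseteq\mathfrak a$ to $\beta\in(\mathfrak a:\mathfrak a)=\mathcal O_D$ explains \emph{why} conditions (iii)--(vi) are independent of the class, something the paper's computation leaves implicit; you also get slightly more, namely that \emph{every} proper $p$-primitive representation is essential, not just the constructed one. Two points need care beyond those you flag. First, the dictionary with invertible ideals only covers lattices whose associated form is primitive; when $D$ is not fundamental, the universally quantified statements (iv) and (vi) as literally written admit lattices with imprimitive forms, which the paper's explicit formula handles verbatim but your argument does not (harmless under the paper's standing primitivity convention, but worth saying). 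Second, in the essentiality step a finite-order proper isometry is multiplication by a root of unity of $K$, which a priori lies only in the maximal order, not in $\mathcal O_D^\times$; to conclude $\zeta\in\mathcal O_D$ and hence $p\mid\beta$ you must combine $p\zeta\in\mathcal O_D$ with $f\zeta\in\mathcal O_D$ and $(p,f)=1$, where $f$ is the conductor --- this is exactly where $p\nmid D$ enters. Both gaps are easily repaired, so the approach is sound.
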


\begin{proof} Note that (i) $\iff$ (ii) is trivial. 
We will prove that
$$
\begin{diagram} 
{\rm (i)} &\rImplies &{\rm (vi)}             &\rImplies   &{\rm (iv)}               &                 &   \\ 
    &                &\dImplies  &                 &\dImplies      &                 &  \\
    &                &{\rm (v)}           &\rImplies    &{\rm(iii)}               &\rImplies &{\rm (i)}. \\
\end{diagram}
$$
Since proofs of all arrows placed in the middle are trivial, it suffices to show that  (i) $\Rightarrow$ (vi) and (iii) $\Rightarrow$ (i).   
 \vskip 0.2cm
 
\noindent {\bf (i) $\Rightarrow$ (vi).}   Let $m,n$ be integers such that $4p^2=m^2+(-D)n^2$ with $(m,n,p)=1$. Let $L=\z \bold x+\z \bold y$ be any $\z$-lattice  such that $f_L=[a,b,c]$, where $b^2-4ac=D$.   
Define $\sigma  : pL \to L$ such that 
$$
\sigma(p\bold x)=\frac{(m+bn)}2\bold x+(-an)\bold y \qquad \text{and} \qquad \sigma(p\bold y)=cn\bold x+\frac{(m-bn)}2\bold y.
$$
Then, one may easily check that $\sigma$ is, in fact,  a proper isometry of $\q \otimes L$ such that $\sigma(pL) \not \subset pL$. The characteristic polynomial $f_{\sigma}(x)$ of $\sigma$ is $x^2-\frac {m}px+1$. Since all roots of $f_{\sigma}$ are not roots of unity,  the order of $\sigma$  is not finite.  
 
 \vskip 0.2cm 
\noindent {\bf (iii) $\Rightarrow$ (i).} Assume that the corresponding binary quadratic form to $L$ is $f_L=[a,b,c]$, where $b^2-4ac=D$. Since $D$ is a non-square integer, we have $ac \ne 0$.  Assume that $\sigma  : pL \to L$ is a proper $p$-primitive representation  such that 
$$
\sigma(p\bold x)=u\bold x+v\bold y \quad \text{and} \quad \sigma(p\bold y)=r\bold x+s\bold y.
$$
Then we have
\begin{align} 
\label{11} &p^2a=au^2+buv+cv^2\!,\\
\label{22}&p^2c=ar^2+brs+cs^2\!,\\
\label{33}&p^2b=2ura+(us+vr)b+2vsc,\\
\label{44}&us-vr= p^2.
\end{align}
By (\ref{33}) and (\ref{44}), we have $(au+bv)r+cvs=0$. Since at least one of $au+bv$ and $cv$ is non-zero, 
 there is a rational number $\mu$ such that
\begin{equation} \label{111}
r=-cv\mu \qquad \text{and}  \qquad s=(au+bv)\mu. 
\end{equation}
 Now, by (\ref{22}) and (\ref{111}),  $a \mu=\pm 1$.   Since $a$ divides both $cv$ and $bv$ and $(a,(b,c))=1$, $a$ divides $v$.  By letting $v=at$ in  (\ref{11}) for some integer $t$, we have 
 \begin{equation} \label{result}
4p^2=(2u+bt)^2+(-D)t^2.
\end{equation}
 Assume that $p$ divides $t$. Then $p$ divides $u$ by (\ref{result}), and  $p$ also divides both $r$ and $s$ by (\ref{111}). This is a contradiction to the assumption that $\sigma$ is $p$-primitive. Therefore, $p$ does not divide $t$ and hence $4p^2 \in Q_p^*([1,0,-D])$.   This completes the proof. 
 \end{proof}
 
 The following lemma will be used in the proof of the main theorem. 
 
 \begin{lem}  \label{count}  Let $L$ be a binary $\z$-lattice with $dL=-D/4$ whose isometry group $O(L)$ contains an improper isometry $\sigma$.  Then, for any vector $\bold x \in L$,  
 $$
 \ord_q(Q(\bold x-\sigma(\bold x)))\equiv \ord_q(Q(\bold x+\sigma(\bold x))) \equiv 0 \ (\text{mod $2$}), 
 $$ 
 for any prime $q$ not dividing $D$.  In particular,  if $\bold x \not \in pL$ and $Q(\bold x) \equiv 0 \ (\text{mod } p)$, where $p$ is a prime not dividing $D$, then $\sigma(\bold x) \ne \pm \bold x$.  
\end{lem}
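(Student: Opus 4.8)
The plan is to analyze the eigenstructure of the improper isometry $\sigma$, reduce everything to two fixed lines, and show that the norms of those lines divide $D$. Since $\sigma\in O(L)$ is improper we have $\det\sigma=-1$, and the Gram matrix $G$ of $L$ is nonsingular because $D$ is a non-square. From the isometry relation $\sigma^{t}G\sigma=G$ we get $\sigma^{t}=G\sigma^{-1}G^{-1}$, hence $\operatorname{tr}\sigma=\operatorname{tr}\sigma^{t}=\operatorname{tr}\sigma^{-1}$; on the other hand, in dimension $2$ one has $\operatorname{tr}\sigma^{-1}=\operatorname{tr}\sigma/\det\sigma=-\operatorname{tr}\sigma$. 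Thus $\operatorname{tr}\sigma=0$, the characteristic polynomial of $\sigma$ is $x^{2}-1$, and $\sigma$ is an involution with eigenvalues $\pm1$. Because $\sigma$ is integral, $\sigma-I$ and $\sigma+I$ are singular integral matrices, so I can choose primitive vectors $\bold w_{+},\bold w_{-}\in L$ with $\sigma(\bold w_{+})=\bold w_{+}$ and $\sigma(\bold w_{-})=-\bold w_{-}$; each spans the corresponding one-dimensional eigenspace of $\sigma$ over $\q$.

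The key step, which I expect to carry the whole argument, is to prove $Q(\bold w_{+})\mid D$ and $Q(\bold w_{-})\mid D$; once this is established the valuation statement is purely formal. I will treat $\bold w_{+}$, the case of $\bold w_{-}$ being identical. As $\bold w_{+}$ is primitive it extends to a $\z$-basis $\{\bold w_{+},\bold z\}$ of $L$, and in this basis $f_{L}=[Q(\bold w_{+}),\,2B(\bold w_{+},\bold z),\,Q(\bold z)]$ still has discriminant $D=(2B(\bold w_{+},\bold z))^{2}-4Q(\bold w_{+})Q(\bold z)$. Since $\sigma(\bold w_{+})=\bold w_{+}$ and $\det\sigma=-1$, the matrix of $\sigma$ in this basis forces $\sigma(\bold z)=\alpha\bold w_{+}-\bold z$ for some $\alpha\in\z$. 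Comparing $B(\sigma(\bold w_{+}),\sigma(\bold z))=B(\bold w_{+},\bold z)$ then yields $2B(\bold w_{+},\bold z)=\alpha Q(\bold w_{+})$, and substituting this into the discriminant formula gives $D=Q(\bold w_{+})\bigl(\alpha^{2}Q(\bold w_{+})-4Q(\bold z)\bigr)$, so $Q(\bold w_{+})\mid D$.

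With this in hand the first assertion follows at once. For any $\bold x\in L$ the vector $\bold x+\sigma(\bold x)$ is fixed by $\sigma$, hence lies in $\q\bold w_{+}\cap L=\z\bold w_{+}$, so $\bold x+\sigma(\bold x)=j\bold w_{+}$ for some $j\in\z$ and $Q(\bold x+\sigma(\bold x))=j^{2}Q(\bold w_{+})$; likewise $\bold x-\sigma(\bold x)=j'\bold w_{-}$ with $Q(\bold x-\sigma(\bold x))=j'^{2}Q(\bold w_{-})$, using $\sigma^{2}=\mathrm{id}$. For a prime $q\nmid D$ we have $q\nmid Q(\bold w_{\pm})$ because $Q(\bold w_{\pm})\mid D$, so $\ord_{q}(Q(\bold x\pm\sigma(\bold x)))=2\ord_{q}(j^{(\prime)})+\ord_{q}(Q(\bold w_{\pm}))\equiv0\pmod 2$, as claimed.

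For the final assertion, suppose $\bold x\notin pL$, $p\nmid D$, and $Q(\bold x)\equiv0\pmod p$. If $\sigma(\bold x)=\bold x$, then $\bold x\in\q\bold w_{+}\cap L=\z\bold w_{+}$, say $\bold x=j\bold w_{+}$; since $\bold x\notin pL$ we have $p\nmid j$, and since $p\nmid Q(\bold w_{+})$ we obtain $\ord_{p}(Q(\bold x))=\ord_{p}(j^{2}Q(\bold w_{+}))=0$, contradicting $p\mid Q(\bold x)$. The case $\sigma(\bold x)=-\bold x$ is excluded identically using $\bold w_{-}$. Hence $\sigma(\bold x)\neq\pm\bold x$.
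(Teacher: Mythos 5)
Your proof is correct, and it reaches the same structural fact the paper relies on --- that the two eigenlines of $\sigma$ carry norms dividing $D$ --- but by a different and more self-contained route. The paper invokes Theorem 43.3 of O'Meara to write the improper isometry as a symmetry $\sigma=\tau_{\bold z}$ in a primitive vector $\bold z$, completes $\bold z$ to a basis $\{\bold z,\bold w\}$, and computes $Q(\bold x-\sigma(\bold x))=(2a+bt)^2Q(\bold z)$ and $Q(\bold x+\sigma(\bold x))=b^2(4Q(\bold w)-Q(\bold z)t^2)$ together with the single identity $D=Q(\bold z)(Q(\bold z)t^2-4Q(\bold w))$, which delivers both divisibilities at once (your $\bold w_-$ is the paper's $\bold z$, and your $Q(\bold w_+)$ is, up to a square, the cofactor $-D/Q(\bold z)$). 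You instead derive $\operatorname{tr}\sigma=0$ from $\sigma^{t}G\sigma=G$ and $\det\sigma=-1$, conclude $\sigma^2=I$ by Cayley--Hamilton, and then treat each primitive eigenvector separately, proving $Q(\bold w_{\pm})\mid D$ by a basis extension. What your version buys is independence from the classification of improper isometries of binary spaces (no citation needed) and a cleaner conceptual statement ($\bold x\pm\sigma(\bold x)$ lands in $\z\bold w_{\mp}$ by primitivity, so its norm is a square times a divisor of $D$); what it costs is doing the basis computation twice. Two minor points worth a sentence in a final write-up: anisotropy of $L$ (i.e.\ $Q(\bold w_\pm)\neq 0$, which holds because $D$ is a non-square) should be noted before dividing by or asserting divisibility of $Q(\bold w_\pm)$, and the degenerate case $\bold x\mp\sigma(\bold x)=0$, where $\ord_q(0)$ is undefined, should be set aside explicitly --- the paper's proof has the same implicit convention.
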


\begin{proof}  Since $\sigma$ is an improper isometry, there is a primitive vector $\bold z \in L$ such that $\sigma=\tau_{\bold z}$ by Theorem 43.3 of \cite{OM}. Let $L=\z \bold z+\z \bold w$. Since $\tau_{\bold z}(\bold w)=\bold w-\frac{2B(\bold z,\bold w)}{Q(\bold z)} \bold z \in L$, there is an integer $t$ such that $2B(\bold z,\bold w)=Q(\bold z)t$. If $\bold x=a\bold z+b\bold w$, then $\sigma(\bold x)=-a\bold z+b(\bold w-t\bold z)$. Hence 
$$
Q(\bold x-\sigma(\bold x))=(2a+bt)^2Q(\bold z) \quad \text{and} \quad  Q(\bold x+\sigma(\bold x))=b^2(4Q(\bold w)-Q(\bold z)t^2).
$$ 
The lemma follows directly from the fact that $D=Q(\bold z)(Q(\bold z)t^2-4Q(\bold w))$. 

Now, assume that $\bold x \in L-pL$ such that $Q(\bold x) \equiv 0 \ (\text{mod } p)$. If $\sigma(\bold x)=-\bold x$, then $b=0$ and hence $\bold x=a\bold z$.  This is a contradiction. Suppose that $\sigma(\bold x)=\bold x$. Then $2a=-bt$ and 
$$
Q(\bold x)=a^2Q(\bold z)+2abB(\bold z,\bold w)+b^2Q(\bold w)=\frac{b^2}4(4Q(\bold w)-Q(\bold z)t^2).
$$ 
Since $D$ is not divisible by $p$ by assumption, both $b$ and $a$ are divisible by $p$, which is also a contradiction. 
\end{proof}

\section{Classification of completely $p$-primitive binary quadratic forms}

Let $D$ be a non-square integer congruent to $0$ or $1$ modulo $4$, and let $\mathfrak G_D$ be a group of proper classes of primitive binary quadratic forms with discriminant $D$. Let $p$ be a prime satisfying $\left( \frac Dp\right)=1$. 

\begin{thm} \label{ambithm} If $p^2 \in Q_p^*(\mathfrak I_D)$, where $\mathfrak I_D \in \mathfrak G_D$ is the identity class, then every proper class in $\mathfrak G_D$ is completely $p$-primitive. Conversely,  if  an ambiguous class in $\mathfrak G_D$  is completely $p$-primitive, then $p^2 \in Q_p^*(\mathfrak I_D)$. 
 \end{thm}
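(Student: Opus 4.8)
The plan is to prove the two directions separately, relying heavily on Proposition \ref{keyp} and Lemma \ref{down}.

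\medskip

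\noindent\textbf{Forward direction.} Assume $p^2 \in Q_p^*(\mathfrak I_D)$. By Lemma \ref{down}, to show an arbitrary proper class $\mathfrak A$ is completely $p$-primitive it suffices to show that $ap^2 \in Q_p^*(\mathfrak A)$ whenever $a \in Q_p^*(\mathfrak A)$. First I would reduce to the coprimality hypothesis of Proposition \ref{keyp}: since $p \nmid D$, I can take the representative $p^2 \in Q_p^*(\mathfrak I_D)$ and the given $a \in Q_p^*(\mathfrak A)$, and I want to apply the proposition with $\mathfrak Z = \mathfrak I_D$. The condition $(a,p^2,D)=1$ holds automatically because $p \nmid D$. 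Proposition \ref{keyp} then yields
$$
ap^2 \in Q_p^*(\mathfrak A \cdot \mathfrak I_D) \cup Q_p^*(\mathfrak A \cdot \mathfrak I_D^{-1}) = Q_p^*(\mathfrak A),
$$
since $\mathfrak I_D \cdot \mathfrak A = \mathfrak I_D^{-1} \cdot \mathfrak A = \mathfrak A$. Thus $ap^2 \in Q_p^*(\mathfrak A)$, and Lemma \ref{down} finishes this direction. The one point requiring care is whether the representative of $p^2$ and the representative of $a$ can be chosen so that the divisibility hypothesis $(a,p^2,D)=1$ in Proposition \ref{keyp} is met; but this is immediate from $\left(\frac Dp\right)=1$ forcing $p\nmid D$, so the intersection of $a$, $p^2$, and $D$ is trivial at $p$ and I may further assume $a$ is $p$-primitively (indeed primitively, after dividing out squares) represented.

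\medskip

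\noindent\textbf{Converse direction.} Now suppose an ambiguous class $\mathfrak D$ (so $\mathfrak D^2 = \mathfrak I_D$) is completely $p$-primitive; I must produce $p^2 \in Q_p^*(\mathfrak I_D)$. Since $\left(\frac Dp\right)=1$, the prime $p$ splits, so $p \in Q(\mathfrak B)$ for some class $\mathfrak B$; more usefully, I would locate an element of $\mathfrak D$ that is divisible by $p$ and $p$-primitively represented. The natural strategy is: pick any $a \in Q_p^*(\mathfrak D)$ with $(a,p)=1$, which exists, and observe that because $\mathfrak D$ is completely $p$-primitive, $ap^2 \in Q_p^*(\mathfrak D)$ as well. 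Then I would like to ``cancel'' $a$ by composing with $\mathfrak D^{-1} = \mathfrak D$: applying Proposition \ref{keyp} to $ap^2 \in Q_p^*(\mathfrak D)$ and $a \in Q_p^*(\mathfrak D)$ (noting the relevant coprimality at $p$), the product $a^2 p^2$ lands in $Q_p^*(\mathfrak D \cdot \mathfrak D) \cup Q_p^*(\mathfrak D \cdot \mathfrak D^{-1}) = Q_p^*(\mathfrak I_D)$, using $\mathfrak D^2 = \mathfrak I_D$. Finally, since $(a,p)=1$, dividing the square factor $a^2$ out of a representation shows $p^2 \in Q_p^*(\mathfrak I_D)$.

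\medskip

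\noindent\textbf{Main obstacle.} I expect the genuine difficulty to be in the converse, specifically in guaranteeing that the coprimality condition $(a,\alpha,D)=1$ of Proposition \ref{keyp} can be arranged for the pair $(ap^2, a)$, since these share the factor $a$. The issue is that Proposition \ref{keyp} only requires coprimality with $D$, not absolute coprimality, but I must still verify that the $p$-primitivity is preserved through the composition and the subsequent division by $a^2$. The cleanest route is probably to choose $a$ itself coprime to $D$ (possible since the values primitively represented by $\mathfrak D$ coprime to $D$ are plentiful by genus/congruence considerations), whence $(ap^2, a, D) = (a, D) = 1$, and then use the ambiguous hypothesis $\mathfrak D = \mathfrak D^{-1}$ to collapse both terms of the proposition's conclusion into $Q_p^*(\mathfrak I_D)$. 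Verifying that the final division by $a^2$ keeps the representation $p$-primitive—i.e.\ that $p \nmid$ the resulting coordinates—is the last technical checkpoint, handled exactly as in the proof of Lemma \ref{tekl}.
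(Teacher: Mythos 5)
Your forward direction is correct and coincides with the paper's argument: apply Proposition \ref{keyp} with $\mathfrak Z=\mathfrak I_D$ (the hypothesis $(a,p^2,D)=1$ is automatic because $p\nmid D$) and conclude with Lemma \ref{down}. No issues there.

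The converse, however, has a genuine gap at the last step. From $ap^2\in Q_p^*(\mathfrak D)$ and $a\in Q_p^*(\mathfrak D)$, Proposition \ref{keyp} does give $a^2p^2\in Q_p^*(\mathfrak I_D)$ when $(a,D)=1$ and $\mathfrak D^2=\mathfrak I_D$. But the implication ``$a^2p^2\in Q_p^*(\mathfrak I_D)$ and $(a,p)=1$ imply $p^2\in Q_p^*(\mathfrak I_D)$'' is false in general: a representation of $a^2p^2$ by a form in $\mathfrak I_D$ need not have both coordinates divisible by $a$, so there is no square factor to divide out. The device of Lemma \ref{tekl} only extracts the content $d$ of whatever representing vector you happen to have and yields $(a^2p^2)/d^2\in Q^*(\mathfrak I_D)$ for that particular $d$, over which you have no control. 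Concretely (as a counterexample to the isolated implication, not to the theorem): for $D=-56$, $p=3$, $a=5$ one has $a^2p^2=225=13^2+14\cdot 2^2\in Q_3^*([1,0,14])=Q_3^*(\mathfrak I_{-56})$, yet $9\notin Q_3^*([1,0,14])$, since its only representations are $(\pm 3,0)$. So cancelling $a^2$ is exactly the point at issue, and your plan in effect assumes what is to be proved.

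The paper's converse proceeds quite differently and, in particular, uses Theorem \ref{ppre}, which your plan never invokes. By Weber's theorem one may take $[a,b,c]\in\mathfrak B$ with $a$ prime; complete $p$-primitivity gives $p^2a=au^2+buv+cv^2$ with $(u,v,p)=1$, and then $a\mid (bu+cv)v$ forces one of two cases. If $a\mid v$, writing $v=at$ gives $4p^2=(2u+bt)^2+(-D)t^2$ with $p\nmid t$, which is condition (i) of Theorem \ref{ppre}. If $a\mid bu+cv$, one builds an improper $p$-primitive representation $\sigma:pL\to L$ and composes it with the improper isometry of $L$ that exists precisely because $\mathfrak B$ is ambiguous, obtaining a proper $p$-primitive representation, which is condition (iii) of Theorem \ref{ppre}. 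That lattice-theoretic equivalence is the ingredient your argument is missing, and the case analysis there is where the ambiguity hypothesis actually does its work.
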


\begin{proof}  First,  assume that $p^2 \in Q_p^*(\mathfrak I_D)$. Let $\mathfrak A$ be any proper class in $\mathfrak G_D$.  Assume that $a \in Q_p^*(\mathfrak A)$.  Then by Proposition \ref{keyp}, we have
$$
ap^2 \in Q_p^*(\mathfrak A\cdot \mathfrak I_D) \cup Q_p^*(\mathfrak A\cdot \mathfrak I_D^{-1})=Q_p^*(\mathfrak A). 
$$
Therefore $\mathfrak A$ is completely $p$-primitive by Lemma \ref{down}.

Conversely, assume that an ambiguous class $\mathfrak B \in \mathfrak G_D$ is completely $p$-primitive. Let $L=\z \bold x+\z \bold y$ be a binary $\z$-lattice such that $f_L=[a,b,c] \in \mathfrak B$.  Since $L$ represents at least one prime primitively by \cite{w}, we may assume that $a$ is a prime. Since $p^2a \in Q(L)=Q_p^*(L)$, there are integers  $u,v$ such that 
$$
p^2a=Q(u\bold x+v\bold y)=au^2+buv+cv^2 \quad \text{and} \quad  (p,u,v)=1.
$$ 
Since $a$  divides $(bu+cv)v$, $a$ divides $v$ or $bu+cv$.  First, assume that $a$ divides $v$. If $v=at$ for some integer $t$, then  $4p^2=(2u+bt)^2+(-D)t^2$. One may easily check that $t$ is not divisible by $p$ from the assumption that $(p,u,v)=1$.  
Therefore, we have $p^2 \in Q_p^*(\mathfrak I_D)$ by Theorem \ref{ppre}. 
Finally, assume that $a$ divides $bu+cv$. Define $\sigma : pL \to L$ such that 
$$
\sigma(p\bold x)=u\bold x+v\bold y \quad \text{and} \quad \sigma(p\bold y)=\left(\frac{bu+cv}a\right)\bold x-u\bold y.
$$
Then, one may easily show that $\sigma$ is an improper $p$-primitive representation. Since $\mathfrak B$ is an ambiguous class and $f_L=[a,b,c] \in \mathfrak B$, there is an improper isometry $\tau \in O(L)$. Then $\tau\sigma : pL \to L$ is a proper $p$-primitive representation. The theorem follows from  Theorem \ref{ppre}.   \end{proof}

From now on, we always assume that $D<0$.  For a positive definite binary quadratic form $[a,b,c] \in \mathfrak C \in \mathfrak G_D$ and a positive integer $n$, we define
$$
r(n,[a,b,c])=r(n,\mathfrak C)=\vert\{ (x,y) \in \z^2 : ax^2+bxy+cy^2=n \}\vert,
$$
and
$$
r_p^*(n,[a,b,c])=r_p^*(n,\mathfrak C)=\vert\{ (x,y) \in \z^2 : ax^2+bxy+cy^2=n, (x,y,p)=1 \}\vert.
$$
We also define $r_p^{\flat}(n,[a,b,c]):=r(n,[a,b,c])-r_p^*(n,[a,b,c])$. 
It is well known that 
$$
R(n,\mathfrak G_D):=\sum_{\mathfrak C \in \mathfrak G_D} r(n,\mathfrak C)=\omega \sum_{k \mid n} \left(\frac Dk\right),
$$
where $\left(\frac{\cdot}{\cdot}\right)$ is the Kronecker's symbol and 
$$
\omega=\begin{cases} 6 \quad &\text{if $D=-3$},\\
       4 \quad &\text{if $D=-4$},\\
   2    \quad &\text{otherwise}.\\  \end{cases}
$$
Similarly, we define $R_p^*(n,\mathfrak G_D):=\sum_{\mathfrak C \in \mathfrak G_d} r_p^*(n,\mathfrak C)$.

\begin{thm} \label{classification} Assume that $p^2 \not \in Q_p^*(\mathfrak I_D)$. A proper class $\mathfrak A \in \mathfrak G_D$ is completely $p$-primitive if and only if  $p^2 \in Q_p^*(\mathfrak A^2)$ and the order of $\mathfrak A$ in $\mathfrak G_D$ is $4$. 
\end{thm}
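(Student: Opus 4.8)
My plan is to treat the two implications separately, first using Lemma \ref{down} to reduce complete $p$-primitivity of $\mathfrak A$ to the single assertion that $ap^2\in Q_p^*(\mathfrak A)$ whenever $a\in Q_p^*(\mathfrak A)$.

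For the sufficiency I would argue as follows. Suppose $\ord(\mathfrak A)=4$ and $p^2\in Q_p^*(\mathfrak A^2)$, and let $a\in Q_p^*(\mathfrak A)$. Since $\left(\frac Dp\right)=1$ forces $(p,D)=1$, we have $(a,p^2,D)=1$, so Proposition \ref{keyp} applied to $a\in Q_p^*(\mathfrak A)$ and $p^2\in Q_p^*(\mathfrak A^2)$ yields $ap^2\in Q_p^*(\mathfrak A\cdot\mathfrak A^2)\cup Q_p^*(\mathfrak A\cdot\mathfrak A^{-2})=Q_p^*(\mathfrak A^3)\cup Q_p^*(\mathfrak A^{-1})$. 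Because $\ord(\mathfrak A)=4$ we have $\mathfrak A^3=\mathfrak A^{-1}$, and since the lattices attached to $\mathfrak A^{-1}$ and $\mathfrak A$ are isometric, $Q_p^*(\mathfrak A^{-1})=Q_p^*(\mathfrak A)$. Hence $ap^2\in Q_p^*(\mathfrak A)$, and Lemma \ref{down} closes this direction.

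For the necessity I would set up the arithmetic of the group $\mathfrak G_D$. Let $\mathfrak P\in\mathfrak G_D$ be a class with $p\in Q^*(\mathfrak P)$ (hence $p\in Q^*(\mathfrak P^{-1})$ too); via Theorem \ref{ppre} and the composition law the hypothesis $p^2\notin Q_p^*(\mathfrak I_D)$ is equivalent to $\mathfrak P^2\neq\mathfrak I_D$, i.e. $r:=\ord(\mathfrak P)\ge 3$. The mechanism I would exploit is that a $p$-primitive representation of an integer of the shape $p^{k}q$ corresponds to a factorization whose $p$-part is a pure power of a single prime divisor of $p$ (never a mixed product), while a prime $q\nmid 2pD$ is primitively represented only by the pair $\{\mathfrak C,\mathfrak C^{-1}\}$ it determines. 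Concretely, for each $j\ge 0$ I would choose—using the classical fact that every proper class primitively represents infinitely many primes—a prime $q\nmid 2pD$ primitively represented by $\mathfrak A\mathfrak P^{-j}$, so that $p^{j}q\in Q_p^*(\mathfrak A)$; complete $p$-primitivity and Lemma \ref{down} then give $p^{j+2}q\in Q_p^*(\mathfrak A)$, and unwinding the factorization forces $\mathfrak A\mathfrak P^{-(j+2)}$ or $\mathfrak A\mathfrak P^{\,j+2}$ into $\{\mathfrak A\mathfrak P^{-j},\mathfrak A^{-1}\mathfrak P^{\,j}\}$. Discarding the impossible case $\mathfrak P^2=\mathfrak I_D$, this says that for every $j$ at least one of $\mathfrak A^2=\mathfrak P^{\,2j+2}$, $\mathfrak A^2=\mathfrak P^{-2}$, or $\mathfrak P^{\,2j+2}=\mathfrak I_D$ holds.

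Taking $j=0$ already gives $\mathfrak A^2=\mathfrak P^{\pm2}$, whence $p^2\in Q_p^*(\mathfrak A^2)$ since $\mathfrak P^2$ (and its inverse) $p$-primitively represents $p^2$. The remaining task, and the main obstacle, is to upgrade this to $\ord(\mathfrak A)=4$: the case $j=0$ says nothing about $\ord(\mathfrak P)$, which a priori could exceed $4$. Substituting $\mathfrak A^2=\mathfrak P^{2}$ (the case $\mathfrak P^{-2}$ is symmetric under $\mathfrak P\leftrightarrow\mathfrak P^{-1}$) into the alternatives reduces them, for each $j$, to $r\mid 2j$ or $r\mid 2j+2$ or $r\mid 4$. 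A short elementary argument—choosing $j$ with $2j\not\equiv 0,-2\pmod r$, which is possible exactly when $r\nmid 4$—shows the first two fail together unless $r\mid 4$; with $r\ge 3$ this forces $r=4$, so $\mathfrak A^2=\mathfrak P^{2}$ has order $2$ and $\ord(\mathfrak A)=4$. I expect the delicate points to be the rigorous justification that a $p$-primitive representation of $p^{j+2}q$ constrains the class as claimed—equivalently, producing the improper $p$-primitive representation $\sigma:pL\to L$ exactly as in the proof of Theorem \ref{ambithm}, the case $a\mid v$ being excluded precisely by $p^2\notin Q_p^*(\mathfrak I_D)$—and the bookkeeping guaranteeing that primes $q$ realizing each prescribed class are indeed available.
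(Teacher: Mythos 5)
Your ``if'' direction is exactly the paper's: Proposition \ref{keyp} with $a\in Q_p^*(\mathfrak A)$ and $p^2\in Q_p^*(\mathfrak A^2)$, the identity $\mathfrak A^3=\mathfrak A^{-1}$, and Lemma \ref{down}. Your ``only if'' direction, however, takes a genuinely different and more uniform route. The paper only has the \emph{total} count $R(n,\mathfrak G_D)=\omega\sum_{k\mid n}\left(\frac Dk\right)$ at its disposal, so it reconstructs the per-class distribution of representations of $qp^2$, $qp^4$, $p^3$ and $p^3r$ by hand, with separate casework for ambiguous classes and improper isometries (Lemma \ref{count}); in particular the exclusion of $\ord(\mathfrak A)=6$ is a self-contained, rather delicate argument. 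Your parametrized family $p^{j}q$ with $q$ chosen in $\mathfrak A\mathfrak P^{-j}$ collapses all of this into the single congruence alternative ``$r\mid 2j$ or $r\mid 2j+2$ or $r\mid 4$'', and $j=1$ already kills every $r\ge 3$ except $r=4$. The group-theoretic bookkeeping (the four cases, the symmetry $\mathfrak P\leftrightarrow\mathfrak P^{-1}$, the deduction $\ord(\mathfrak A)=4$ from $\ord(\mathfrak P)=4$ and $\mathfrak A^2=\mathfrak P^2$) all checks out, as does the observation that a $p$-primitive representation of $p^{j+2}q$ is automatically primitive.

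The gap is exactly where you flagged it: the claim that a primitive representation of $p^{j+2}q$ forces the class into $\{\mathfrak P^{\pm(j+2)}\mathfrak Q^{\pm1}\}$ is the load-bearing step, and neither justification you offer closes it. The fallback via the improper representation $\sigma:pL\to L$ from Theorem \ref{ambithm} is tailored to $p^2a$ with $a$ prime (the dichotomy $a\mid v$ versus $a\mid bu+cv$ and the resulting isometry of determinant $-1$ do not survive replacing $p^2$ by $p^{j+2}$), so it cannot handle $j\ge 1$. The statement itself is classical --- it is the per-class refinement, via Dirichlet composition or the correspondence with primitive invertible ideals of norm prime to the conductor, of the very counting formula the paper uses --- so the proof is salvageable by citing that correspondence (legitimate here since $D<0$ and $p,q\nmid D$). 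But if you instead try to derive it from the paper's own toolkit, you must show that the $8$ $p$-primitive representations counted by $R(p^{j+2}q,\mathfrak G_D)-R(p^{j}q,\mathfrak G_D)$ are exhausted by the classes $\mathfrak P^{\pm(j+2)}\mathfrak Q^{\pm1}$ (each contributing at least $2$ by Lemma \ref{keyl}), and when these four classes coincide or are ambiguous you are forced into precisely the Lemma \ref{count} casework that fills the paper's proof. So as written the proposal relocates the difficulty rather than removing it; with an explicit citation of the form--ideal correspondence it becomes a complete and noticeably shorter proof.
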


\begin{proof} Note that $p^2 \in Q_p^*([1,1,1])$ for any prime $p \equiv 1 \ (\text{mod } 3)$ and $q^2 \in Q_q^*([1,0,1])$ for any prime $q \equiv 1 \ (\text{mod } 4)$. Hence we may assume that $D<-4$ and the order of isometry group of $f$ is $2$  or $4$ for any binary quadratic form $f$ with discriminant $D$.  

First, we prove ``if" part. Assume that  $\mathfrak A^4=\mathfrak I_D$ in $\mathfrak G_D$  and $p^2 \in Q_p^*(\mathfrak A^2)$. 
For any integer $a \in Q_p^*(\mathfrak A)$,  we have
$$
ap^2 \in Q_p^*(\mathfrak A\cdot \mathfrak A^2) \cup Q_p^*(\mathfrak A\cdot \mathfrak A^{-2})=Q_p^*(\mathfrak A^{-1})= Q_p^*(\mathfrak A),
$$
by Proposition \ref{keyp}.  Therefore, $\mathfrak A$ is completely $p$-primitive by Lemma \ref{down}.

To prove ``only if" part, assume that $\mathfrak A$ is completely $p$-primitive. Note that $\mathfrak A$ is not an ambiguous class by theorem \ref{ambithm}.  Let $q \in Q_p^*(\mathfrak A)$ be an odd prime not dividing $p\cdot D$.  Let $\mathfrak D \in \mathfrak G_D$  be a non-identity proper class such that $p^2 \in Q_p^*(\mathfrak D)$. First, we will show that $p^2 \in Q_p^*(\mathfrak A^2)$ by proving that $\mathfrak D=\mathfrak A^2$ or $\mathfrak D=\mathfrak A^{-2}$.

 Assume that $\mathfrak D$ is not an ambiguous class. 
By Lemma \ref{tekl}, we have
$$
qp^2 \in Q_p^*(\mathfrak A\cdot\mathfrak D) \cap Q_p^*(\mathfrak A\cdot\mathfrak D^{-1}) \cap Q_p^*(\mathfrak A^{-1}\cdot\mathfrak D) \cap Q_p^*(\mathfrak A^{-1}\cdot\mathfrak D^{-1}).
$$
Since $\mathfrak A$ is not an ambiguous class, any of three classes among $\mathfrak A\cdot\mathfrak D, \  \mathfrak A^{-1}\cdot\mathfrak D^{-1},\   \mathfrak A^{-1}\cdot\mathfrak D$ and $\mathfrak A\cdot\mathfrak D^{-1}$ cannot be the same simultaneously.  Suppose that all four proper classes are different with each other  in $\mathfrak G_D$. 
 Note that $r_p^{\flat}(qp^2,\mathfrak A)=r_p^{\flat}(qp^2,\mathfrak A^{-1}) \ge 2$. Since $R(qp^2,\mathfrak G_D)=12$, we have
$$
\begin{array} {rl}
r_p^{\flat}(qp^2,\mathfrak A) =r_p^{\flat}(qp^2,\mathfrak A^{-1})\!\!\!&=r_p^*(qp^2,\mathfrak A\cdot\mathfrak D)=r_p^*(qp^2,\mathfrak A\cdot\mathfrak D^{-1})\\
&=r_p^*(qp^2,\mathfrak A^{-1}\cdot\mathfrak D)=r_p^*(qp^2,\mathfrak A^{-1}\cdot\mathfrak D^{-1})=2.
\end{array}
$$
Since $qp^2 \in Q_p^*(\mathfrak A)$ by Lemma \ref{down}, we have $\mathfrak D=\mathfrak A^2$ or $\mathfrak A^{-2}$.  Assume that $\mathfrak A\cdot\mathfrak D=\mathfrak A^{-1}\cdot\mathfrak D^{-1}$ and  $\mathfrak A\cdot\mathfrak D^{-1} \ne \mathfrak A^{-1}\cdot\mathfrak D$.  Then, the binary $\z$-lattice corresponding to a binary form in $\mathfrak A\cdot\mathfrak D$  has an improper isometry. Therefore we have $r_p^*(qp^2,\mathfrak A\cdot\mathfrak D) \ge 4$ by Lemma \ref{count}.  Furthermore, one may easily show that  $r_p^*(qp^2,\mathfrak A\cdot\mathfrak D)=4$ and 
$$
r_p^{\flat}(qp^2,\mathfrak A) =r_p^{\flat}(qp^2,\mathfrak A^{-1})=r_p^*(qp^2,\mathfrak A\cdot\mathfrak D^{-1})=r_p^*(qp^2,\mathfrak A^{-1}\cdot\mathfrak D)=2.
$$
Since $qp^2 \in Q_p^*(\mathfrak A)$, we have $\mathfrak D=\mathfrak A^2$. The proof of the case when  $\mathfrak A\cdot\mathfrak D \ne \mathfrak A^{-1}\cdot\mathfrak D^{-1}$ and  $\mathfrak A\cdot\mathfrak D^{-1} = \mathfrak A^{-1}\cdot\mathfrak D$ is quite similar to the above.   Finally, if $\mathfrak A\cdot\mathfrak D=\mathfrak A^{-1}\cdot\mathfrak D^{-1}$ and  $\mathfrak A\cdot\mathfrak D^{-1}=\mathfrak A^{-1}\cdot\mathfrak D$, then one may easily show that $\mathfrak D=\mathfrak I_D$, which is a contradiction.

Now, assume that $\mathfrak D$ is an ambiguous class. Choose integers $b, c$ suitably so that 
$$
[q,b,p^2c] \in \mathfrak A\quad \text{and} \quad [p^2,b,qc] \in \mathfrak D.
$$ 
Since $r_p^*(p^2,\mathfrak D) \ge 4$ by Lemma \ref{count}, there is a vector $(x,y) \in \z^2-\{\pm(1,0)\}$ such that 
$$
p^2x^2+bxy+qcy^2=p^2\quad \text{and} \quad (x,y,p)=1.
$$  
Note that $[qp^2,b,c] \in \mathfrak A\cdot\mathfrak D$. Since $qp^2x^2+bx(qy)+c(qy)^2=qp^2$, $(x,qy) \ne (1,0)$ and $(x,qy,p)=1$, we have $r_p^*(qp^2,\mathfrak A\cdot\mathfrak D) \ge 4$. Similarly,  $r_p^*(qp^2,\mathfrak A^{-1}\cdot\mathfrak D) \ge 4$. Therefore, we have 
$$
r_p^{\flat}(qp^2,\mathfrak A) =r_p^{\flat}(qp^2,\mathfrak A^{-1})=2 \quad \text{and} \quad r_p^*(qp^2,\mathfrak A\cdot\mathfrak D)=r_p^*(qp^2,\mathfrak A^{-1}\cdot\mathfrak D)=4.
$$
Since $qp^2 \in Q_p^*(\mathfrak A)$, we have $\mathfrak D=\mathfrak A^2$.  Therefore, $p^2 \in Q_p^*(\mathfrak A^2)$ in any cases.

Suppose that $\mathfrak A^4 \ne \mathfrak I_D$ and $\mathfrak A^6 \ne \mathfrak I_D$.   Note that $p^2 \in Q_p^*(\mathfrak A^2)$ and $p^4 \in Q_p^*(\mathfrak A^4)$ by Lemma \ref{keyl}.  Hence we have 
$$
qp^4 \in Q_p^*(\mathfrak A^5) \cap Q_p^*(\mathfrak A^{-3}) \cap Q_p^*(\mathfrak A^3) \cap Q_p^*(\mathfrak A^{-5}).
$$
 Note that $R_p^*(qp^4,\mathfrak G_D)=20-R(qp^2,\mathfrak G_D)=20-12=8$.  Assume that $\mathfrak A^{10} \ne \mathfrak I_D$  and $\mathfrak A^{8} \ne \mathfrak I_D$. 
Then by Lemma \ref{count},  we have 
$$
r_p^*(qp^4,\mathfrak A^5)=r_p^*(qp^4,\mathfrak A^{-3})=r_p^*(qp^4,\mathfrak A^{3})=r_p^*(qp^4,\mathfrak A^{-5})=2.
$$
This is a contradiction, for $qp^4 \in Q_p^*(\mathfrak A)$ by assumption.
Assume that $\mathfrak A^{10} = \mathfrak I_D$. Then, we have
$$
r_p^*(qp^4,\mathfrak A^5)=4 \quad \text{and} \quad r_p^*(qp^4,\mathfrak A^{-3})=r_p^*(qp^4,\mathfrak A^{3})=2.
$$
Since $qp^4 \in Q_p^*(\mathfrak A)$,  the order of $\mathfrak A$ is $4$, which is a contradiction.
Now, assume that $\mathfrak A^{8} = \mathfrak I_D$. 
Since $\mathfrak A^4$ is an ambiguous class, $r_p^*(qp^4,\mathfrak A^5)=r_p^*(qp^4,\mathfrak A^{-5})=4$ by a similar reasoning given above. Since $qp^4 \in Q_p^*(\mathfrak A)$, $\mathfrak A^4=\mathfrak I_D$ or $\mathfrak A^6=\mathfrak I_D$, which is a contradiction. 

In the remaining, we prove that  the order of $\mathfrak A$ is $4$ by showing $\mathfrak A^6 \ne \mathfrak I_D$.  Suppose, on the contrary, that $\mathfrak A^6=\mathfrak I_D$. 
Assume that $[p^2,b,c] \in \mathfrak A^2$. Let $\mathfrak F$ be the proper class in $\mathfrak G_D$ such that  $[p,b,pc] \in \mathfrak F$. 
Then clearly $\mathfrak F^2=\mathfrak A^2$.  Suppose that $\mathfrak F=\mathfrak A$ or $\mathfrak A^{-1}$, that is, $p$ is represented by $\mathfrak A$. Since $\mathfrak A^3=\mathfrak A^{-3}$ by assmption, we have
$$
r_p^{\flat}(p^3,\mathfrak A)=r_p^{\flat}(p^3,\mathfrak A^{-1})=2 \quad\text{and} \quad r(p^3,\mathfrak A^3)=4.
$$
Since $R(p^3,\mathfrak G_D)=8$, we have  $p^3 \not \in Q_p^*(\mathfrak A)$, which is a contradiction by Lemma \ref{down}. From now on, we assume that $\mathfrak F \ne \mathfrak A$ and $\mathfrak F \ne \mathfrak A^{-1}$. Choose an odd prime $r \in Q_p^*(\mathfrak F^{-1}\cdot\mathfrak A)$ not dividing  $p\cdot D$. Assume that $pr \in Q_p^*(\mathfrak X)$ for some $\mathfrak X \in \mathfrak G_D$. Then by Lemma 2.2 of \cite{ef}, we have
$$
p \in Q_p^*(\mathfrak X\cdot\mathfrak F^{-1}\cdot\mathfrak A) \cup Q_p^*(\mathfrak X^{-1}\cdot\mathfrak F^{-1}\cdot\mathfrak A). 
$$ 
Since $\mathfrak F$ and $\mathfrak F^{-1}$ are the only proper classes containing  a binary quadratic form representing $p$, we have $\mathfrak X=\mathfrak A$ or $\mathfrak A^{-1}$. Furthermore, since $R(pr, \mathfrak G_D)=8$, we have 
$$
r_p^*(pr,\mathfrak A)=r_p^*(pr,\mathfrak A^{-1})=4.
$$ 
Now, let $f=[r,B,p^3C]\in \mathfrak F^{-1}\cdot\mathfrak A$ and $g=[p^3,B,rC] \in  \mathfrak F^3$ be binary quadratic forms. Note that 
$h=[p^3r,B,C] \in \mathfrak F^{-1}\cdot\mathfrak A\cdot \mathfrak F^3=\mathfrak A^3$. 
Since $\mathfrak F^3=\mathfrak F^{-3}$,  there is  a vector $(u,v) \in \z^2-\{\pm(1,0)\}$ such that 
$$
g(u,v)=p^3u^2+Buv+rCv^2=p^3, \  \ (u,v,p)=1.
$$
Note that $\ord_r(g(u\pm1,v))=\ord_r(p^3(2\pm2u)\pm Bv) \equiv 0 \pmod 2$ by Lemma \ref{count}. Since 
$$
h(1,0)=h(u,rv)=p^3ru^2+Bu(rv)+C(rv)^2=p^3r \quad \text{and} \quad (u,rv,p)=1,
$$
the binary $\z$-lattice $L_h=\z \bold x+\z \bold y$ corresponding to $h$ has at least $4$ primitive vectors $\pm \bold x$ and 
$\pm(u\bold x+rv\bold y)$,
all of whose norms are $p^3r$. Since the proper class $\mathfrak A^3$ containing $h$ is an ambiguous class, there is an improper isometry $\sigma \in O(L_h)$.  Since
$$
\ord_r(Q(u\bold x+rv\bold y \pm \bold x))=\ord_r(h(u\pm1,rv))=1+\ord_r(p^3(2\pm 2u)\pm Bv) \equiv 1 \ (\text{mod } 2),
$$
$\sigma(u\bold x+rv\bold y)\ne \pm \bold x$ by Lemma \ref{count}. Therefore 
$$
\pm \bold x, \  \ \pm (u\bold x+rv\bold y), \  \ \pm \sigma(\bold x) \ \ \text{and}  \ \  \pm \sigma(u\bold x+rv\bold y)
$$
 are all different primitive vectors  in $L_h$ whose norms are $p^3r$.  Furthermore, since 
 $$
 r_p^{\flat}(p^3r,\mathfrak A)=r_p^{\flat}(p^3r,\mathfrak A^{-1})=4 \quad \text{and}  \quad R(p^3r,\mathfrak G_D)=16,
 $$
  we have  $r_p^*(p^3r,\mathfrak A^3)=8$.   This implies that $p^3r$ is not primitively represented by $\mathfrak A$, though $pr$ is primitively represented by $\mathfrak A$. This is a contradiction. Therefore the order of $\mathfrak A$ in $\mathfrak G_D$ is $4$. This completes the proof.
\end{proof}

\begin{rmk} {\rm (i) Note that $\mathfrak G_{-56} \simeq \z/4\z$ and the proper class $\mathfrak A \in \mathfrak G_{-56}$ containing $[3,2,5]$ is of order $4$. Furthermore, 
 $36 \in Q_3^*(\mathfrak A^2)$, where $[2,0,7] \in \mathfrak A^2$. Therefore, for any positive integer $a$ such that the diophantine equation $3y^2+2yz+5z^2=a$ has an integer solution, it has an integer solution $(y,z)=(b,c)$ such that $(b,c,3)=1$ by Theorem \ref{classification}.
 \vskip 0.2cm 
 
\noindent (ii)  For a positive integer $m \not \equiv 0 \pmod 3$, let $f_m(x,y,z)=m^2x^2+3y^2+2yz+5z^2$ be a ternary quadratic form.  Note that $f_m(x,y,z) \equiv 1 \pmod 3$ if and only if 
 $$
 \begin{array} {rl}
 (x,y,z) \equiv (0,\pm1,\pm1), \!\!\! &(\pm1,-1,0), \ (\pm1,1,-1), \\ 
                                 &(\pm1, -1,1), \ (\pm1,1,0), \ (\pm1,0,0) \pmod 3.
 \end{array}
 $$
Assume that for an integer $N \equiv 1 \pmod 3$, $f_m(x,y,z)=N$ has an integer solution. Then there is an integer solution
$f_m(a,b,c)=N$ with $a-b+c \equiv 0 \pmod 3$, except the case when $N=f(x,0,0)=m^2x^2$ by Theorem \ref{classification}. Therefore, if we define $\widetilde{f_m}(x,y,z)=m^2(3x+y-z)^2+3y^2+2yz+5z^2$, then
$$
Q(f_m) \cap  \{ k \in S_{3,1} : k \not \in m^2\z^2 \} =Q(\widetilde{f_m})) \cap  \{ k \in S_{3,1} : k \not \in m^2\z^2\},
$$
 where $S_{3,1}$ denotes the set of positive integers that are congruent to $1$ modulo $3$.
In particular, assume that $m=1$. Then one may easily show that $\widetilde{f_1}=4x^2+6y^2+7z^2+6yz+2zx$. In fact, the class number of  $\widetilde{f_1}$ is three and the other two ternary quadratic forms in the genus of $\widetilde{f_1}$ are 
$$
g(x,y,z)=x^2+3y^2+42z^2\quad  \text{and} \quad h(x,y,z)=x^2+6y^2+21z^2.
$$
 One may easily check that $d\widetilde{f_1}=2\cdot3^2\cdot7$, and $2^2, 3^2$, $7^2$  are all represented by $\widetilde{f_1}$.  For any prime $p$ not dividing $2\cdot3\cdot7$, the isometry class $[\widetilde{f_1}]$ is connected to $[g]$ or $[h]$ in the graph $\mathfrak G_{f,p}(0)$ defined in \cite{jlo} (see also \cite{bh}). Furthermore, since $1 \in Q(g)\cap Q(h)$, $p^2$ is represented by $\widetilde{f_1}$. Therefore, every square of an integer except $1$ is represented by $\widetilde{f_1}$. Consequently, we have 
$$
Q(f_1) \cap  S_{3,1} -\{1\}=Q(\widetilde{f_1}) \cap S_{3,1}.
$$ } 
\end{rmk}

\end{document}